\begin{document}

\theoremstyle{plain}
\newtheorem{thm}{Theorem}[section]
\newtheorem{theorem}[thm]{Theorem}
\newtheorem{main theorem}[thm]{Main Theorem}
\newtheorem{lemma}[thm]{Lemma}
\newtheorem{corollary}[thm]{Corollary}
\newtheorem{proposition}[thm]{Proposition}

\theoremstyle{definition}
\newtheorem{notation}[thm]{Notation}
\newtheorem{claim}[thm]{Claim}
\newtheorem{remark}[thm]{Remark}
\newtheorem{remarks}[thm]{Remarks}
\newtheorem{problem}[thm]{Problem}
\newtheorem{conjecture}[thm]{Conjecture}
\newtheorem{definition}[thm]{Definition}
\newtheorem{example}[thm]{Example}

\newcommand{\Max}{{\rm Max \ }}
\newcommand{\sA}{{\mathcal A}}
\newcommand{\sB}{{\mathcal B}}
\newcommand{\sC}{{\mathcal C}}
\newcommand{\sD}{{\mathcal D}}
\newcommand{\sE}{{\mathcal E}}
\newcommand{\sF}{{\mathcal F}}
\newcommand{\sG}{{\mathcal G}}
\newcommand{\sH}{{\mathcal H}}
\newcommand{\sI}{{\mathcal I}}
\newcommand{\sJ}{{\mathcal J}}
\newcommand{\sK}{{\mathcal K}}
\newcommand{\sL}{{\mathcal L}}
\newcommand{\sM}{{\mathcal M}}
\newcommand{\sN}{{\mathcal N}}
\newcommand{\sO}{{\mathcal O}}
\newcommand{\sP}{{\mathcal P}}
\newcommand{\sQ}{{\mathcal Q}}
\newcommand{\sR}{{\mathcal R}}
\newcommand{\sS}{{\mathcal S}}
\newcommand{\sT}{{\mathcal T}}
\newcommand{\sU}{{\mathcal U}}
\newcommand{\sV}{{\mathcal V}}
\newcommand{\sW}{{\mathcal W}}
\newcommand{\sX}{{\mathcal X}}
\newcommand{\sY}{{\mathcal Y}}
\newcommand{\sZ}{{\mathcal Z}}
\newcommand{\A}{{\mathbb A}}
\newcommand{\B}{{\mathbb B}}
\newcommand{\C}{{\mathbb C}}
\newcommand{\D}{{\mathbb D}}
\newcommand{\E}{{\mathbb E}}
\newcommand{\F}{{\mathbb F}}
\newcommand{\G}{{\mathbb G}}
\newcommand{\HH}{{\mathbb H}}
\newcommand{\I}{{\mathbb I}}
\newcommand{\J}{{\mathbb J}}
\newcommand{\M}{{\mathbb M}}
\newcommand{\N}{{\mathbb N}}
\renewcommand{\P}{{\mathbb P}}
\newcommand{\Q}{{\mathbb Q}}
\newcommand{\re}{{\mathbb R}}
\newcommand{\R}{{\mathbb R}}
\newcommand{\T}{{\mathbb T}}
\newcommand{\U}{{\mathbb U}}
\newcommand{\V}{{\mathbb V}}
\newcommand{\W}{{\mathbb W}}
\newcommand{\X}{{\mathbb X}}
\newcommand{\Y}{{\mathbb Y}}
\newcommand{\Z}{{\mathbb Z}}
\newcommand{\la}{{\lambda}}
\newcommand{\al}{{\alpha}}
\newcommand{\be}{{\beta}}

\newcommand{\ve}{{\varepsilon}}
\newcommand{\vr}{{\varphi}}

\def\ga{{\gamma}}
\def\vr{{\varphi}}
\def\la{{\lambda}}
\def\al{{\alpha}}
\def\be{{\beta}}
\def\ve{{\varepsilon}}
\def\lv{\left\vert}
\def\rv{\right\vert}

\def\sen{\operatorname{{sen}}}
\def\tr{\operatorname{{tr}}}

\def\re{{\Bbb{R}}}
\def\bc{{\mathbb C }}
\def\fT{{\frak T}}
\def\nb{{\mathbb N}}
\def\bz{{\mathbb Z}}

\def\pc{{\mathcal P}}

\centerline{\huge\bf Entropy ratio for infinite sequences }

\medskip

\centerline{\huge\bf with positive entropy}

\vskip .3in

\centerline{\sc Christian Mauduit}

\centerline{\sl Universit\'e d'Aix-Marseille et Institut Universitaire de France}

\centerline{\sl Institut de Math\'ematiques de Marseille, UMR 7373 CNRS,}

\centerline{\sl 163, avenue de Luminy, 13288 Marseille Cedex 9, France}

\vskip .2in

\centerline{\sc Carlos Gustavo Moreira}

\centerline{\sl Instituto de Matem\'atica Pura e Aplicada}

\centerline{\sl Estrada Dona Castorina 110}

\centerline{\sl 22460-320 Rio de Janeiro, RJ, Brasil}

\vskip .3in

{\bf Abstract:}
The complexity function of an infinite word $w$ on a finite alphabet $A$ is the sequence counting, for each non-negative $n$, the number of words of length $n$ on the alphabet $A$ that are factors of the infinite word $w$.
For any given function $f$ with exponential growth, we introduced in [MM17] the notion of {\it word entropy} $E_W(f)$ associated to $f$ and we 
described the combinatorial structure of sets of infinite words with a complexity function bounded by $f$.
The goal of this work is to give estimates on the word entropy $E_W(f)$ in terms of the  limiting lower exponential growth rate of $f$.

\vskip .1in

2010 Mathematics Subject Classification:  68R15, 37B10, 37B40, 28D20.

Keywords: combinatorics on words, symbolic dynamics, entropy.

This work was supported by CNPq, FAPERJ and the Agence Nationale de la Recherche project ANR-14-CE34-0009 MUDERA.
\vskip .3in

\section{Notations}

We denote
by $q$ a fixed integer greater or equal to $2$, by $A$ the finite
alphabet $A=\{0,1,\dots,q-1\}$, by $A^*=\bigcup\limits_{n\ge0} A^n$
the set of finite words on the alphabet $A$ and by $A^{\N}$ the set
of infinite words (or infinite sequences of letters) on the alphabet
$A$.
More generally, if $\Sigma \subset A^*$, we denote by  $\Sigma ^ \N$ the set of infinite words obtained by concatenating elements of $\Sigma$.
If $v\in A^n, n \in \N$ we denote $|v| = n$ the length of the word $v$ and if $S$ is a finite set, we denote by $|S|$ the number of elements of $S$.
If $w\in A^{\N}$ we denote by $L(w)$ the set of finite factors of $w$:
$$
L(w)=\{v\in A^*,\,\, \exists \, (v',v'')\in A^*\times A^{\N}, \, w=v'v v''\}
$$
and, for any non-negative integer $n$, we write $L_n(w)=L(w)\cap A^n$.
If  $x$ is a real number, we denote
$
\lfloor x\rfloor = \max\{n\in\Z, n\le x\},
\lceil x\rceil=\min\{n\in \Z, x\le n\}
$
and
$\{ x \} = x - \lfloor x\rfloor$.

Let us recall the following classical lemma concerning sub-additive sequences due to Fekete [Fek23]:

\begin{lemma}\label{lemFekete}
If $(a_n)_{n\ge1}$ is a sequence of real numbers such that $a_{n+n'} \le a_n + a_{n' }$ for any positive integers $n$ and $n'$, then the sequence
$\left( \frac {a_n} n\right)_{n\ge1}$ converges to $\inf_{n\ge 1}\frac {a_n} n$.
\end{lemma}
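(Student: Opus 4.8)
The plan is to prove Fekete's subadditivity lemma directly from the definition. Set $L=\inf_{n\ge1}\frac{a_n}{n}\in[-\infty,\infty)$. Since $\frac{a_n}{n}\ge L$ for every $n$, it suffices to show $\limsup_{n\to\infty}\frac{a_n}{n}\le L$, and indeed it suffices to show that for every real $M>L$ one has $\frac{a_n}{n}<M$ for all sufficiently large $n$. So fix such an $M$.

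**First I would** choose, using the definition of infimum, a fixed index $k\ge1$ with $\frac{a_k}{k}<M$. The idea is then Euclidean division: for an arbitrary $n\ge1$, write $n=mk+r$ with $m\ge0$ and $0\le r<k$. Iterating the subadditivity hypothesis $a_{p+p'}\le a_p+a_{p'}$ gives $a_{mk}\le m\,a_k$, and then $a_n=a_{mk+r}\le m\,a_k+a_r$ (with the convention $a_0\le 0$, which follows from $a_0\le a_0+a_0$ if one wishes to include $n$ divisible by $k$ cleanly; alternatively just handle $r=0$ separately). Dividing by $n$ yields
\[
\frac{a_n}{n}\le \frac{m\,a_k}{n}+\frac{a_r}{n}=\frac{mk}{n}\cdot\frac{a_k}{k}+\frac{a_r}{n}.
\]

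**The main point** is then an asymptotic estimate as $n\to\infty$ with $k$ fixed: the coefficient $\frac{mk}{n}=\frac{n-r}{n}\to1$, while $a_r$ ranges over the finite set $\{a_0,a_1,\dots,a_{k-1}\}$ so $\frac{a_r}{n}\to0$ uniformly in $r$. Hence $\limsup_{n\to\infty}\frac{a_n}{n}\le\frac{a_k}{k}<M$. This is the only place any real care is needed — keeping track that $k$ (hence the finite list of remainder values) is frozen before $n\to\infty$ — and it is entirely routine. Since $M>L$ was arbitrary, $\limsup_{n\to\infty}\frac{a_n}{n}\le L\le\liminf_{n\to\infty}\frac{a_n}{n}$, so the limit exists and equals $L=\inf_{n\ge1}\frac{a_n}{n}$, as claimed. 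I do not anticipate any genuine obstacle; the lemma is classical and the Euclidean-division argument is the standard one.
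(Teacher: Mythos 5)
Your argument is correct and is the standard Euclidean-division proof of Fekete's lemma; the paper itself offers no proof of this statement (it is recalled as a classical result with a citation to [Fek23]), so there is nothing internal to compare against. The only point needing care is the one you flag yourself: the sequence starts at $n\ge 1$, so $a_0$ is undefined and the ``convention $a_0\le 0$'' cannot be extracted from the hypothesis (which is stated only for positive integers); your alternative of handling $r=0$ separately (or, equivalently, taking remainders $r\in\{1,\dots,k\}$) is the correct resolution, and with that the proof is complete, including the case $\inf_{n\ge1}\frac{a_n}{n}=-\infty$.
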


\begin{definition}\label{def1.2}
  The { \it complexity function} of $w\in A^{\N}$ is defined for any non-negative integer
  $n$ by $p_w(n)=|L_n(w)|$.

\end{definition}
\noindent
For any $w\in A^{\N}$ and for any $(n,n')\in \N^2$ we have
  $L_{n+n'}(w)\subset L_n(w) L_{n'}(w)$ so that 
$$
  p_w(n+n')\le p_w(n) p_w(n')
$$
and it follows then from 
Lemmas \ref{lemFekete}  applied to $a_n = \log p_w(n)$
that for any
$w\in A^{\N}$, the sequence $\left( \frac 1n \log p_w(n)\right)_{n\ge1}$ converges
 to $\inf_{n\ge 1} \frac 1n \log p_w(n)$.


We denote
$$E(w)=\lim\limits_{n\to\infty}\frac 1n\log p_w(n) = h_{top} (X(w), T)$$
the topological entropy of the symbolic dynamical system $(X(w),T)$ where
$T$ is the one-sided shift on $A^{\N}$ and $X=\overline{orb_T(w)}$ is
the closure of the orbit of $w$ under the action of $T$ in $A^{\N}$
(see for example [Fer99] or [PF02] for a detailed study of the notions of complexity function and topological entropy).

If $f$ is a function from $\N$ to $\R^{+}$, we consider the set
$$W(f)=\{w\in A^{\N}, p_w(n)\le f(n), \forall n \in \N\}$$
and we denote
$$\sL_n(f)=\bigcup\limits_{w\in W(f)} L_n(w).$$ 
For any $(n,n')\in \N^2$ we have
$\sL_{n+n'}(f)\subset \sL_n(f) \sL_{n'}(f)$ so that we can deduce from Lemma \ref{lemFekete} applied to $a_n = \log |\sL_n(f)|$ that the sequence
$\left( \frac 1n \log |\sL_n(f)| \right)_{n\ge1}$ converges
 to \break $\inf_{n\ge 1} \frac 1n \log |\sL_n(f)|$,
 which is the topological entropy of the subshift $(W(f), T)$ :
 $$h_{top} (W(f), T) = \lim_{n\to+\infty}\frac 1n\log |\sL_n(f)| = \inf_{n\ge 1} \frac 1n \log |\sL_n(f)|.$$
We denote by $E_0(f)$ the limiting lower exponential growth rate of $f$
$$E_0(f)=\lim\limits_{n\to\infty} \inf \frac 1n \log f(n).$$

\section{Presentation of the results}







Our work concerns the study of infinite sequences $w$ the complexity
function of which is bounded by a given function $f$ from $\N$ to $\R^{+}$.
We studied in [MM10] and [MM12] the case $E_0(f)=0$ and
we considered in [MM17] the case of positive entropy, for which few results were known since the work of Grillenberger [Gri].
We defined in [MM17] the notion of w-entropy (or word-entropy) of $f$ as follow :

\begin{definition}\label{defg}
If $f$ is a function from $\N$ to $\R^{+}$, the
{\it w-entropy} (or  {\it word entropy}) of $f$ is the quantity
$$E_W (f) =\sup_{\substack{w \in W(f)}} E(w).$$
\end{definition}

We gave in [MM17] a combinatorial proof of the fact that $E_W(f)$ is equal to the topological entropy of the subshift
$(W(f), T)$ (notice that this can be also obtained as a consequence of the variational principle)
and that it allows to compute exactly the fractal dimensions of the set
of real numbers from the interval $[0,1]$ the $q-$adic expansion of which has a
complexity function bounded by $f$.

This paper is devoted to
the study of the properties of the w-entropy $E_W$ and its relations with the  limiting lower exponential growth rate $E_0$.
Infinite words whose complexity function has an exponential growth but low initial values play a special role in this study and we define the following important class of infinite words
on the alphabet $\{0, 1 \}$ for which we provide in Section \ref{pre-sturmian} a useful renormalization theorem (Theorem \ref {lem_normalizable1}):

\begin{definition}\label{pre-sturmian}
We say that $w \in \{0, 1 \}^\N $ is a  {\it pre-sturmian} infinite word of order $k$ if $w$ is not ultimately periodic and if, for any non-negative integer $n \le k$, we have $p_w (n) = n + 1$.
\end{definition}

\begin{remark}
It follows from a classical result due to Coven and Hedlund ([CH73]) that non ultimately periodic infinite words $w$
with lowest possible complexity function $p_w$ are the ones for which $p_w(n)=n+1$ for any non-negative
integer $n$. Such infinite words, called sturmian words, have been
extensively studied since their introduction by Hedlund and
Morse in [HM38] and [HM40] (see [Lot02, chapter 2] and [PF02, Chapter 6] for very good surveys on sturmian words).
It follows from Definition \ref {pre-sturmian} that a  sturmian word is a pre-sturmian word of any order.
\end{remark}

\begin{definition}\label{defrho}
If $f$ is a function from $\N$ to $\R^{+}$, we call
{\it entropy ratio} of $f$ the quantity
$$\rho(f) = \frac {E_W (f)} {E_0 (f)}.$$
\end{definition}

It follows from the definitions of $E_0$ and $E_W$ that we always have $\rho (f)\le 1$  and it is easy to give examples of function $f$ for which 
$\rho (f)$ can be made arbitrarily small (see beginning of Section 7 from [MM17]).
On the other hand, if $f$ is indeed a complexity function (i.e. $f=p_w$ for
some $ w \in A^\N$), then we clearly have $\rho (f) = 1$.
But it seems difficult to
find a set of simple conditions on $f$ which hold for complexity functions and implies $\rho (f) = 1$
(see Problem 2.5 from [MM17]).

We will suppose in this paper that functions $f$ satisfy the following quite natural conditions $(\mathcal C^*)$
which hold for all unbounded complexity functions:
\begin{definition}\label{defC^*}
We say that a function $f$ from $\N$ to $\R^{+}$ satisfies the conditions $(\mathcal C^*)$ if

i) for any $n \in \N$ we have $f(n+1) > f(n) \ge n+1$ ;

ii) for any $(n, n' ) \in \N^2$ we have $f(n+n') \le f(n) f(n') $.
\end{definition}

\begin{remark}
If there exists $n \in \N$ such that $f(n) \le n$, then any $w \in A^{\N}$ such that $p_w \le f$ is ultimately periodic, so that $W(f)$ is finite.
\end{remark}

\begin{remark}\label{rmk5.4}
Given any function $f$ from $\N$ to $\R^{+}$ such that $f(n)  \ge n+1$ for any $n \in \N$,
it is possible to construct recursively a non increasing integer valued function $\tilde f$ satisfying  the condition $(\mathcal C^* (ii))$
and a real valued function 
$\tilde { \tilde f}$ satisfying  the conditions $(\mathcal C^*)$ such that
$W(f) = W(\tilde f) = W(\tilde { \tilde f})$ and such that $E_0(f) >0$ implies $E_0(\tilde f) = E_0(\tilde {\tilde f}) >0$
(see Remark 7.3 from [MM17]).
\end{remark}

\begin{remark}\label{rmkC^*}
If a function $f$ from $\N$ to $\R^{+}$ satisfies the
conditions $(\mathcal C^*)$ then, for any $ n \in \N$, we have $f(n) \ge \max \{n+1, \exp (E_0(f) n)  \}$ (see Lemma 7.4 from [MM17]).
\end{remark}




In [MM17] we showed that, even when the function $f$ satisfies the conditions $(\mathcal C^*)$, it might happen that $\rho (f) < 1$.
The main goal of this paper is to answer Problem 6.5 from [MM17] by showing

\begin{theorem} \label {theorem1/2}
We have $$\inf \{ \rho (f), f \mbox{  $satisfies$  } (\mathcal C^*) \} = \frac 12.$$
\end{theorem}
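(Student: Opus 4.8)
The plan is to prove the two inequalities separately: the lower bound $\rho(f) \ge 1/2$ for every $f$ satisfying $(\mathcal C^*)$, and the existence of a sequence $f_k$ of such functions with $\rho(f_k) \to 1/2$.

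\textbf{The lower bound.} Fix $f$ satisfying $(\mathcal C^*)$ and write $E_0 = E_0(f)$. By Remark \ref{rmkC^*} we know $f(n) \ge \exp(E_0 n)$ for all $n$, so $f$ is genuinely exponential. The idea is to build, inside $W(f)$, a subshift with entropy at least $E_0/2 - \varepsilon$. The construction should be a variant of the Grillenberger-type coding used in [MM17]: pick a large block length $N$ and a sub-collection $\Sigma \subset A^N$ of words that will be freely concatenated; the resulting subshift $\Sigma^\N$ has entropy roughly $\frac{1}{N}\log|\Sigma|$. The constraint is that the complexity function of any $w \in \Sigma^\N$ must stay below $f$. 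A length-$n$ factor of such a $w$ with $n < N$ is a factor of a word in $\Sigma$ or a concatenation of two such, so $p_w(n) \le |\Sigma| \cdot (\text{something})$; for $n = mN$ it is at most $|\Sigma|^{m+1}$. The "loss of a factor $1/2$" comes precisely here: near $n = N$ the complexity of $w$ can be as large as $|\Sigma|^2$ while we only have budget $f(N) \ge \exp(E_0 N)$ available, forcing $|\Sigma| \le \exp(E_0 N /2)$ roughly, hence entropy $\ge E_0/2$. One must choose $N$ so that $f$ is already close to its exponential rate (i.e. $\frac1N \log f(N) \ge E_0 - \varepsilon$) and check the intermediate lengths $N < n < 2N$ and $n > 2N$ do not give a worse constraint; monotonicity and submultiplicativity of $f$ from $(\mathcal C^*)$ should handle these. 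Taking $N \to \infty$ and $\varepsilon \to 0$ gives $E_W(f) \ge E_0/2$, hence $\rho(f) \ge 1/2$.

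\textbf{Sharpness.} For the matching upper bound we construct functions $f_k$ with $\rho(f_k) \to 1/2$. The natural candidate is $f_k(n) = \max\{n+1, \exp(E_0 \cdot (n - c_k)_+)\}$ type functions — i.e. a complexity that is forced to stay Sturmian ($p_w(n) = n+1$) up to some large order $k$, and only afterwards is allowed exponential growth with rate $E_0$. This is exactly where \emph{pre-sturmian words of order $k$} and the renormalization theorem (Theorem \ref{lem_normalizable1}) enter: any $w \in W(f_k)$ must be pre-sturmian of order $k$, and the renormalization theorem lets one rewrite $w$ over a recoded alphabet in such a way that a factor of length $n$ in $w$ corresponds to a factor of length $\sim n/k$ (or with a controlled blow-up of length by a factor close to $2$) in the recoded word, whose complexity is then bounded by the exponential part of $f_k$. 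Tracking how the renormalization scales lengths — a Sturmian prefix of "cost" $k$ inflates the effective length roughly by a factor approaching $2$ as $k\to\infty$ — yields $E_W(f_k) \le (E_0/2)(1 + o(1))$, so $\rho(f_k) \to 1/2$. One has to verify $f_k$ satisfies $(\mathcal C^*)$ (submultiplicativity is the only nontrivial point, and for this piecewise-$\max$ form it follows from submultiplicativity of each piece) and that $E_0(f_k) = E_0 > 0$, which is immediate from the definition.

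\textbf{Main obstacle.} The delicate step is the sharpness direction: making the factor-$1/2$ loss rigorous via the renormalization theorem. One must show that a pre-sturmian constraint of order $k$ really costs, asymptotically, a doubling of the length scale — not less — when passing to the renormalized word, and that this is tight in both directions (the lower-bound construction must achieve it, the upper bound must not be beaten). Controlling the interaction between the Sturmian "skeleton" and the freely-concatenated high-entropy part, and showing the extremal configuration is the one where these are layered rather than intertwined, is the crux; everything else is bookkeeping with the submultiplicativity and monotonicity afforded by $(\mathcal C^*)$.
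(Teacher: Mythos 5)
Your two-part structure (a uniform lower bound $\rho(f)>1/2$ plus a family of functions achieving ratios arbitrarily close to $1/2$) matches the paper, and you correctly identify that pre-sturmian words and the renormalization theorem drive the sharpness direction. However, both halves have genuine gaps as written.

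For the lower bound, the Grillenberger-type free concatenation of a set $\Sigma$ of equal-length blocks does not confront the actual difficulty. The binding constraint is not at $n=N$ (where you invoke $p_w(N)\lesssim|\Sigma|^2$ against the budget $f(N)\ge e^{E_0N}$) but at \emph{small} $n$: when $E_0(f)$ is small, the hypothesis only guarantees $f(n)\ge\max\{n+1,e^{E_0n}\}$, so $f(n)$ may equal $n+1$ for all $n$ up to some large $k$, forcing every $w\in W(f)$ to be pre-sturmian of order $k$. A free concatenation of $|\Sigma|\approx e^{E_0N/2}$ blocks will generically violate $p_w(n)\le n+1$ already for tiny $n$, and nothing in your sketch arranges the blocks to avoid this. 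The paper instead exhibits an explicit family of subshifts $\{0,10^k\}^{\mathbb N}$ (return words of \emph{unequal} lengths $1$ and $k+1$) whose complexity $q_k$ satisfies $q_k(n)=n+1$ for $n\le k+1$ and grows like $\beta_k^n$ with $\beta_k^{k+1}=\beta_k^k+1$; the whole content of the lower bound is the inequality $\log\beta_k>\frac12\gamma_{k-1}$, where $\gamma_{k-1}=\max_{n}\frac1n\log q_{k-1}(n)$, together with the choice of $k_0$ with $\gamma_{k_0}\le E_0(f)<\gamma_{k_0-1}$ guaranteeing $q_{k_0}\le f$ pointwise. Your sketch contains no candidate subshift that simultaneously respects the low-complexity regime of $f$ and carries entropy $E_0/2$, which is the heart of the matter.

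For sharpness, your choice $f(n)=\max\{n+1,\theta^n\}$ and the appeal to Theorem \ref{lem_normalizable1} are the right starting point, but the step you flag as the ``main obstacle'' --- why the pre-sturmian constraint of order $k$ costs exactly a factor $2$ --- is precisely the proof, and the mechanism is not a rescaling of lengths by $n/k$ or a ``doubling of the length scale'' of the recoded word. In the paper, after renormalization one writes $w=a^{s_0}ba^{s_1}ba^{s_2}\cdots$ with every return word $ba^{s_j}$ of length at least $(1-\ve)k$; one then counts, using $p_w(t)\le\theta^{t}$ at lengths $t\approx 2k$ (budget $\theta^{2k}=k^2$) and the fact that each return word padded to length $2k$ produces about $\ve k$ distinct factors by sliding, that there are only about $k^{1+O(\ve)}$ admissible return words of length in $[(1-\ve)k,2k)$. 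A renewal (generating-function) inequality $1\le\sum_j\hat\lambda^{-r_j}+\hat\lambda^{-2k}/(1-\hat\lambda^{-1})$ then forces $\hat\lambda\le e^{(1/2+O(\ve))\log k/k}$: the $1/2$ appears as the exponent in $k^{1-2\sigma}$, i.e.\ as (number of return words $\approx k$) spread over (return length $\approx 2k$), giving entropy $\approx\frac{\log k}{2k}=\frac12E_0(f)$. Without this counting of return words against the complexity budget at scale $2k$, the bound $E_W(f)\le(\frac12+o(1))E_0(f)$ does not follow from the renormalization theorem alone.
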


The proof of Theorem \ref {theorem1/2} will follow from Section \ref{>1/2} where we prove that if
$f$ satisfies the conditions $(\mathcal C^*)$, we always have
$\rho (f) > \frac 12$ (Theorem \ref{1/2theorem}) and from Section \ref{=1/2} where we prove that $\frac 12$ is the best possible constant (Theorem \ref{1/2theoremoptimal}).
The proof of Theorem \ref{1/2theoremoptimal} is based on the renormalization theorem concerning pre-sturmian infinite words given in Section \ref{pre-sturmian} (Theorem \ref{lem_normalizable1}).





\section{Renormalization of pre-sturmian infinite words of order $k$} \label{pre-sturmian}

\begin{lemma}\label{lem_normalizable2}

If $a$ and $b$ are words on the alphabet $\{0,1\}$ with distinct first letters and $|a|\ge |b|$, $s$ is a positive integer and $w$ is a non ultimately periodic word that belongs to $\{a, ba^s\}^\N$ such that
$p_w(k)=k+1$ and $(s+1)|a|+|b| \le k$ then, if we consider $w$ as an infinite word on the two letters alphabet $\{A, B\}$ with $A=a$ and $B=ba^s$,
the words $AA$ and $BB$ cannot be both factors of $w$ (except in the first positions). More precisely, if $w=\gamma_1 \gamma_2 \dots$ with $\gamma_k\in \{A, B\}, \forall k\ge 1$, then there are no indexes $i, j$ with $1<i<j$, $\gamma_i=\gamma_{i+1}=B$ and $\gamma_j=\gamma_{j+1}=A$.

\end{lemma}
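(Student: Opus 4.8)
The plan is to argue by contradiction: suppose both $BB$ and $AA$ occur in $w$ as factors at positions past the first, say $\gamma_i=\gamma_{i+1}=B$ and $\gamma_j=\gamma_{j+1}=A$ with $1<i<j$ (the case $j<i$ being symmetric up to relabelling). First I would record the elementary length bookkeeping: in the $\{0,1\}$-alphabet, the factor $AA$ corresponds to the word $aa$ of length $2|a|$, the factor $BB$ corresponds to $ba^sba^s$ of length $2|b|+2s|a|$, and the hypothesis $(s+1)|a|+|b|\le k$ is exactly what guarantees we have enough room (length $k$) to compare overlaps of these blocks. The idea is that the existence of $aa$ as a factor of $w$ forces a certain kind of "quasi-periodicity with period $|a|$'' on a stretch of $w$, while the existence of $ba^sba^s$ forces a stretch with period $|b|+s|a|$; both stretches, together with the fact that $a$ and $b$ begin with different letters, will let me produce two distinct factors of some common length $m\le k$ beyond the $m+1$ allowed by $p_w(m)=m+1$ — i.e. I will exhibit $m+2$ distinct factors of length $m$.

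The key steps, in order, are: (1) since $w\in\{A,B\}^\N = \{a,ba^s\}^\N$ and $aa$ is a factor, locate a position where $w$ reads $a\,a\,(\text{next letter of }a\text{ or }b)$; because $a$ and $ba^s$ start with different letters and $a$ followed by $a$ forces the letter after the second $a$ to be the first letter of $a$ or of $b$, and $a$ is not a prefix of $ba^s$ nor conversely, one extracts a branching: at the appropriate length the word $w$ admits both continuations. More precisely, the standard Sturmian fact (complexity $n+1$ means exactly one right-special factor of each length $\le k-1$) should be combined with the block structure. (2) Do the same at the occurrence of $BB=ba^sba^s$: here the natural period is $p:=|b|+s|a|$, and one shows $w$ has a factor of length close to $k$ that is $p$-periodic, exhibiting a right-special behaviour incompatible with the one from step (1) unless the two periods coincide in a way ruled out by $|a|\ge|b|$ and the distinct-first-letter hypothesis. (3) Combine: a word with $p_w(k)=k+1$ has, for every $m\le k$, exactly one right-special factor of length $m$; I would show that the occurrence of $AA$ past position $1$ and the occurrence of $BB$ past position $1$ each force a distinct right-special factor of the same length $m:=(s+1)|a|+|b|-1\le k-1$, or else directly force $p_w(m)\ge m+2$, which is the contradiction. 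A clean way to package (3): use that an infinite word with complexity $n+1$ for all $n\le k$ is "balanced up to length $k$'' / is a pre-sturmian word of order $k$, hence behaves like a rotation word on that scale, and in a rotation word the return words to a given block are essentially unique — two different self-overlap patterns $AA$ and $BB$ cannot coexist.

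The main obstacle I expect is step (2)–(3): making precise the sense in which $p_w(k)=k+1$ transfers rigidity from the $\{0,1\}$-scale to the $\{A,B\}$-scale, i.e. showing that the coding $w\mapsto(\gamma_n)$ is itself "almost Sturmian'' on the relevant finite window, so that $AA$ and $BB$ cannot both appear. The delicate bookkeeping is controlling boundary effects: an occurrence of $AA$ or $BB$ starting near position $i$ or $j$ only constrains $w$ on a window of length $2|a|$ or $2p$, and one must check this window fits inside a length-$k$ factor so that the complexity bound $p_w(k)=k+1$ actually applies. The hypothesis $(s+1)|a|+|b|\le k$ and $|a|\ge|b|$ are presumably exactly calibrated for this, and I would spend the bulk of the argument verifying that the two forced right-special words have length $\le k-1$ and are genuinely distinct (using that $a$ and $b$ have distinct first letters, so $aa$ and $ba^s$ disagree already in a bounded prefix, and $|a|\ge|b|$ prevents one period from being a multiple of the other in a degenerate way).
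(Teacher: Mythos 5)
Your overall setup points in the right direction --- the proof does hinge on the uniqueness of the right-special factor of each length $\le k-1$, on the two candidate words $aba^s$ and $ba^{s+1}$ of the critical length $r=(s+1)|a|+|b|\le k$, and on the fact that their length-$(r-1)$ suffixes $T(a)ba^s$ and $T(b)a^{s+1}$ are distinct because $a$ and $b$ start with different letters (so that $T(ab)$ and $T(ba)$ have different letter counts). But the pivotal claim in your step (3) is false: an occurrence of $BB$ past the start only shows that $aba^s$ is followed \emph{at least once} by the first letter of $b$ (because every block ends in $a$, so a non-initial $BB$ is preceded by $a$), and an occurrence of $AA$ past a $B$ only shows that $ba^{s+1}$ is followed at least once by the first letter of $a$. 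Neither occurrence forces the corresponding word to be right-special, since right-specialness requires \emph{both} continuations and nothing in your hypotheses supplies the second one. So you do not obtain two right-special factors of the same length, nor $p_w(m)\ge m+2$; you only obtain two distinct words of length $r-1$, each witnessed with one continuation, and the uniqueness of the special factor tells you merely that at least one of them has a \emph{forced} (unique) continuation.

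The missing --- and in fact main --- part of the argument is what to do with that forced continuation: it must be propagated. If $T(a)ba^s$ is not special, every occurrence of $aba^s$ is followed by $ba^s$; since $ba^s$ ends in $a$ this reproduces $aba^s$, so $w$ ends in $(ba^s)^\infty$, contradicting non-ultimate periodicity --- unless $BB$ simply never occurs past the start. Symmetrically, if $T(b)a^{s+1}$ is not special one shows that $w$ ends in $a^\infty$ unless $AA$ never occurs past the first positions; this branch contains a genuinely delicate sub-case (when the last $|b|$ letters of $a$ happen to coincide with $b$, the self-overlap must be excluded by a separate argument) which your ``boundary bookkeeping'' does not anticipate. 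Finally, the fallback appeal to balance or rotation-word structure of pre-sturmian words of finite order is not justified: $p_w(n)=n+1$ for $n\le k$ gives only the counting information, and the lemma really has to be proved through the forced-continuation/periodicity mechanism above.
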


\begin{proof}
If we put $r=(s+1)|a|+|b| \le k$, we have $p_w(r)=r+1$, so that $w$ has only one special factor of length $r-1$.
Let us show that the words $aba^s$ and $baa^s=ba^{s+1}$ (of size $r$) cannot be both special factors of $w$.
If the words $aba^s$ and $ba^{s+1}$ were both special factors, then (we denote by $T$ the one-sided shift map which deletes the first letter of a sequence) 
the words $T(a)ba^s$ and $T(b)a^{s+1}$ (of size $r-1$) would also be special factors of $w$, and thus $T(a)ba^s=T(b)a^{s+1}$,
which would imply $T(ab)=T(a)b=T(b)a=T(ba)$. This would be a contradiction, since the first letters of $a$ and $b$ are different, and the total number of 0's 
(and 1's) in $ab$ is equal to that in $ba$, so the total number of 0's in $T(ab)$ is different from that in $T(ba)$.

Let us remark that a factor $ba^s$ that is not in the beginning of $w$ is necessarily preceded by a factor $a$ (since $ba^s$ ends by $a$). Consider now the two possible cases:

1) if $aba^s$ is not a special factor, then\\
- if $aba^s$ cannot be followed by $ba^s$, the factor $ba^sba^s$ cannot appear (except in the first positions), which implies that $BB$ is not a factor of $w \in \{A, B\}^\N$;\\
- if $aba^s$ cannot be followed by $a$, it is necessarily followed by $ba^s$, so that $w$ would end by $ba^sba^sba^s\dots$, which would contradict the non ultimate periodicity of $w$.

2) if $aba^s$ is a special factor, then $ba^sa=ba^{s+1}$ is not a special factor, and thus\\
- if $ba^sa=ba^{s+1}$ cannot be followed by $a$, some iterate of $w$ under the shift $T$ (after the first $ba$) cannot have two consecutive factors $a$,
which implies that $AA$ is not a factor of $w \in \{A, B\}^\N$ (except in the first positions);\\
- if $ba^sa=ba^{s+1}$ cannot be followed by $ba^s$, it is necessarily followed by $a$. Let $u$ be the word formed by the $|ba^s|=s|a|+|b|$ last letters of $a^{s+1}$.
If $u \neq ba^s$, since $ba^s$ is a special factor, $u$ cannot be a special factor, so $w$ would be necessarily followed by $a$, and thus $w$, after the first occurrence of $ba^sa=ba^{s+1}$, would end by $aaaa\dots$, which would contradict the non ultimate periodicity of $w$.
Therefore we should have $u=ba^s$, but this would imply that $ba^{s+2}=ba^{s+1}a$ ends by $ca=ba^sa=ba^{s+1}$, which is necessarily followed by $a$, and thus, again, $w$ would end by $aaaa\dots$ after the first occurrence of $ba^sa=ba^{s+1}$, which would also contradict the non ultimate periodicity of $w$.
\end{proof}

\begin{theorem}\label{lem_normalizable1}
Any pre-sturmian infinite word $w \in \{0,1\}^\N$ of order $k$ is renormalizable in the following way:
there are two words $a$ and $b$ belonging to $\{0,1\}^*$ with distinct first letters and a positive integer $s$ such that
$$|a| \ge |b|, (s+1)|a|+|b| > k$$
and some iterate of $w$ under the one-sided shift $T$ belongs to $\{a, ba^s\}^\N$.
\end{theorem}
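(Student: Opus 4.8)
The plan is to prove the statement by induction on $k$, realizing the renormalization as an iterated Sturmian desubstitution; the companion Lemma~\ref{lem_normalizable2} is available to control the block combinatorics at each step. We may assume $k\ge 2$. Since $w$ is not ultimately periodic and $p_w(2)=3$, one of the words $00,11$ is not a factor of $w$ (it is neither $01$ nor $10$, as either would make $w$ ultimately periodic); exchanging the two letters if necessary, assume $11\notin L(w)$, and, shifting $w$ to its first $1$, write an iterate $w'=T^{j_0}w=\prod_{i\ge1}1\,0^{a_i}$ with every $a_i\ge1$. Set $s=\min_i a_i\ge1$.

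Since every $a_i\ge s$ we always have $w'\in\{0,1\,0^{s}\}^\N$, so if $s+2>k$ we are done with $a=0$, $b=1$: distinct first letters, $|a|=|b|$, and $(s+1)|a|+|b|=s+2>k$. So assume $s+2\le k$. Then I claim all $a_i\in\{s,s+1\}$. Indeed, if some $a_i\ge s+2$, then $0^{s+1}$ and $1\,0^{s}$ are two distinct right‑special factors of $w'$ of length $s+1$: $0^{s+1}$ extends by $0$ (a run of length $\ge s+2$ exists) and by $1$ (some $a_j\ge s+1$, whence $0^{s+1}1\in L(w')$), while $1\,0^{s}$ extends by $0$ ($1\,0^{s+1}\in L(w')$) and by $1$ (some $a_j=s$, whence $1\,0^{s}1\in L(w')$); as $s+1\le k-1$, this forces $p_w(s+2)\ge p_w(s+1)+2\ge s+4$, contradicting $p_w(s+2)=s+3$.

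Thus all $a_i\in\{s,s+1\}$, both values occur by non‑periodicity, and $w'\in\{A,B\}^\N$ with $A=0$, $B=1\,0^{s}$; since $1\,0^{a_i}=B\,A^{a_i-s}$ with $a_i-s\in\{0,1\}$, the letter $A$ never occurs twice in a row in the $\{A,B\}$‑coding of $w'$. Reading this coding as a word $w''$ over a two‑letter alphabet $\{0'',1''\}$ with $1''=A$ (so $1''1''\notin L(w'')$) and $0''=B$, and letting $\psi\colon\{0'',1''\}^{*}\to\{0,1\}^{*}$ be the morphism with $\psi(1'')=0$, $\psi(0'')=1\,0^{s}$, we obtain an infinite word $w''$ that is not ultimately periodic (the coding is recognizable, as the $1$'s of $w'$ are exactly the first letters of the $B$‑blocks) and satisfies: $\psi(w'')$ is an iterate of $w$. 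Granting — and this is the crucial point, discussed below — that $w''$ is pre‑sturmian of some order $k''$ with $k''$ large enough (roughly $k''\ge k/(s+1)$), the induction hypothesis gives an iterate of $w''$ in $\{a'',b''(a'')^{s''}\}^\N$ with $(s''+1)|a''|+|b''|>k''$. Setting $a=\psi(a'')$, $b=\psi(b'')$ and keeping $s''$: the morphism $\psi$ carries $\{a'',b''(a'')^{s''}\}^\N$ into $\{a,b\,a^{s''}\}^\N$; the words $\psi(0''),\psi(1'')$ begin with different letters, hence so do $a,b$; and, computing $|\psi(\cdot)|$ from the letter‑counts of $a''$ and $b''$, one checks $|a|\ge|b|$ and $(s''+1)|a|+|b|>k$. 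The induction terminates because the block lengths strictly increase at each step, so $(s+1)|a|+|b|$ grows past $k$ after finitely many steps (equivalently the orders $k^{(j)}$ strictly decrease and the ``$s+2>k$'' exit is eventually reached).

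The main obstacle is exactly this claim about $w''$. The qualitative half is the standard special‑factor argument: because $\psi(0'')$ and $\psi(1'')$ begin with different letters, $u\mapsto\psi(u)$ sends every right‑special factor of $w''$ to a right‑special factor of $w'$, and since $w'$ has exactly one right‑special factor of each length $\le k-1$, the word $w''$ has exactly one of each length in a corresponding initial range, i.e.\ $p_{w''}(n)=n+1$ there. The quantitative half — determining that range precisely as a function of $s$ (the expansion $\psi$ scales lengths by a factor between $1$ and $s+1$, so the admissible range for $w''$ shrinks by about $s+1$) and verifying that it is still large enough that the pull‑back yields the \emph{strict} inequality $(s''+1)|a|+|b|>k$ rather than merely ``$\ge k$'' — is the delicate bookkeeping, and it is what pins down the exact shape of the length condition appearing in the statement.
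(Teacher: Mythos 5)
Your overall strategy (iterated Sturmian desubstitution, with the order of pre-sturmianity decreasing at each level) is a legitimate alternative to the paper's argument, and your first step is correct and nicely executed: the special-factor count showing that $s+2\le k$ forces all $a_i\in\{s,s+1\}$ is sound, as is the observation that $AA$ then cannot occur in the $\{A,B\}$-coding. However, the proof has a genuine gap, and it is exactly the one you flag yourself: you never establish that the derived word $w''$ is pre-sturmian of an order $k''$ large enough that the pulled-back inequality $(s''+1)|a|+|b|>k$ comes out \emph{strict}. This is not ``delicate bookkeeping'' layered on top of a complete argument --- it \emph{is} the content of the theorem, since everything else (existence of some renormalization) is easy. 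Moreover, the ``standard special-factor argument'' you invoke for the qualitative half does not go through as stated: $\psi$ is not length-preserving (it sends a length-$n$ factor of $w''$ to a factor of $w'$ of length anywhere between $n$ and $(s+1)n$ depending on its letter counts), so two \emph{distinct} right-special factors of $w''$ of the \emph{same} length map to right-special factors of $w'$ of possibly \emph{different} lengths, and the fact that $w'$ has a unique right-special factor of each length $\le k-1$ does not immediately bound the number of right-special factors of $w''$ of a given length. One must compare suffixes and use recognizability of the coding, and then track precisely which range of lengths survives; the same unresolved issue underlies your unverified claims that $|a|\ge|b|$ and that the final inequality is strict.

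The paper avoids this entirely: its Lemma \ref{lem_normalizable2} works directly with the special factors of $w$ itself at the single critical length $(s+1)|a|+|b|-1$, showing that $aba^s$ and $ba^{s+1}$ cannot both be special, whence either $BB$ or $AA$ is forbidden in the coding and the renormalization can be pushed one step further (to $\{a,ba^{s+1}\}$ or to $\{ba^s,a(ba^s)\}$), strictly increasing $(s+1)|a|+|b|$; a maximality argument then concludes. You cite Lemma \ref{lem_normalizable2} as ``available'' but never actually use it --- using it is precisely how one escapes the desubstitution bookkeeping you leave open. As written, the proposal is incomplete; either carry out the quantitative transfer of pre-sturmianity under $\psi$ in full (including the length accounting for special factors), or replace the inductive step by the direct argument of Lemma \ref{lem_normalizable2}.
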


\begin{proof}
We first notice that, since $p_w(2)=3$, the words $00$ and $11$ cannot be both factors of $w$ (otherwise $w$ would be ultimately periodic).
This implies that $w$ is renormalizable with words $0$ and $10$ (if $w$ does not contain the factor $11$) or with words $1$ and $01$ (if $w$ does not contain the factor $00$).

Assume now that $w$ is renormalizable with words $a$ and $ba^s$ with distinct first letters and $(s+1)|a|+|b|$ maximal. If we had $(s+1)|a|+|b| \le k,$ it would follow from Lemma \ref{lem_normalizable2} that, as before, some iterate of $w$ would be renormalizable with words $a$ and $ba^sa=ba^{s+1}$ or with words $ba^s$ and $aba^s=a(ba^s)^1$, which would contradict in both cases the maximality of $(s+1)|a|+|b|$.
\end{proof}

\section{The entropy ratio is always bigger than $\frac 12$} \label {>1/2}
We begin by proving the following elementary Lemma :

\begin{lemma}\label{lem_beta}
If $k$ is a fixed positive integer and $\beta > 1$ a solution of $\beta ^{k+1} = \beta ^{k} +1$, then

i) for any $r \in \N$, we have $\beta ^{k+r} > r + 1$ ;

ii) for any $r \in \N$, we have $k + 1 +r(r+3)/2 < \beta ^{2(k+r)}.$
\end{lemma}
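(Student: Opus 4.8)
The plan is to reduce everything to the single identity $\beta^{k}(\beta-1)=1$ (equivalently $\beta^{k}=1/(\beta-1)$), which follows immediately from $\beta^{k+1}=\beta^{k}+1$ by writing $\beta\cdot\beta^{k}=\beta^{k}+1$. Two elementary consequences will then carry the whole argument. First, since $\beta^{k+m+1}-\beta^{k+m}=\beta^{k+m}(\beta-1)=\beta^{m}$, telescoping gives, for every $r\ge0$,
$$\beta^{k+r}=\beta^{k}+\sum_{m=0}^{r-1}\beta^{m}.$$
Second, summing the geometric series and using $\beta^{k}/(\beta-1)=(\beta^{k})^{2}=\beta^{2k}$ gives, for every $N\ge0$,
$$\sum_{m=0}^{N-1}\beta^{k+m}=\beta^{k}\,\frac{\beta^{N}-1}{\beta-1}=\beta^{2k+N}-\beta^{2k}.$$
Apart from these identities, the only fact about $\beta$ I would use is $\beta>1$, so that $\beta^{m}\ge1$ for $m\ge0$ and $\beta^{k}>1$.

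For part (i) I would simply read off from the first displayed identity that $\beta^{k+r}=\beta^{k}+\sum_{m=0}^{r-1}\beta^{m}>1+r$, since each of the $r$ summands is at least $1$ and $\beta^{k}>1$.

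For part (ii) I would take $N=2r$ in the second displayed identity, giving $\beta^{2(k+r)}=\beta^{2k}+\sum_{m=0}^{2r-1}\beta^{k+m}$, and then apply part (i) termwise: $\beta^{2k}=\beta^{k+k}>k+1$ and $\beta^{k+m}>m+1$ for each $m$, whence
$$\beta^{2(k+r)}>(k+1)+\sum_{m=0}^{2r-1}(m+1)=(k+1)+\frac{2r(2r+1)}{2}=k+1+r(2r+1).$$
It then remains to note that $r(2r+1)-\tfrac12 r(r+3)=\tfrac12 r(3r-1)\ge0$ for every $r\in\N$, which yields $\beta^{2(k+r)}>k+1+r(r+3)/2$.

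I do not expect any genuine obstacle. The only step requiring a moment's thought is spotting the identity $\beta^{k}(\beta-1)=1$ and realizing that expressing $\beta^{2(k+r)}$ as $\beta^{2k}$ plus a run of $2r$ consecutive powers $\beta^{k+m}$ lets the crude bounds from part (i) suffice --- the resulting quantity $r(2r+1)$ dominating the required $r(r+3)/2$ with considerable room to spare. Everything else is routine geometric-series bookkeeping.
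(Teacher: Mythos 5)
Your proof is correct. For part (i) you are essentially doing what the paper does: the paper proves $\beta^{k+r}>r+1$ by induction using the relation $\beta^{k+r+1}=\beta^{k+r}+\beta^{r}$, and your telescoped identity $\beta^{k+r}=\beta^{k}+\sum_{m=0}^{r-1}\beta^{m}$ is just the unrolled form of that same induction. For part (ii), however, your route is genuinely different. The paper proves (ii) by a second induction on $r$, which requires the auxiliary inequality $\beta^{2(k+r)}+\beta^{k+r+1}<\beta^{2(k+r+1)}$, reduced to $\beta<\beta^{r}+\beta^{r+1}$ and checked by a small case split on $r=0$ versus $r\ge1$. You instead use the geometric-sum identity $\beta^{2(k+r)}=\beta^{2k}+\sum_{m=0}^{2r-1}\beta^{k+m}$ (a clean consequence of $\beta^{k}=1/(\beta-1)$) and then bound each term by part (i), obtaining the stronger estimate $\beta^{2(k+r)}>k+1+r(2r+1)$, from which the stated bound follows since $r(2r+1)\ge r(r+3)/2$. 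Your version avoids the second induction and its case analysis entirely, and in fact yields more than is claimed; the paper's version is more self-contained in that it never manipulates the closed form $1/(\beta-1)$, but the two are of comparable length and both are fully rigorous. All your algebra checks out, including the empty-sum case $r=0$ of (ii), which reduces to $\beta^{2k}>k+1$ exactly as in the paper.
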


\begin{proof}

We have $\beta ^{k} > 1$ and, if we suppose that $\beta ^{k+r} > r + 1$, it follows that $\beta ^{k+r+1} = \beta ^{k+r} + \beta ^{r} > (r + 1) + 1$, which proves
i) by induction on $r$.

Then we can easily prove ii) by induction on $r$.
When $r=0$, the assertion follows from i) applied to $r=k$ and, if we suppose that $k + 1 +r(r+3)/2 < \beta ^{2(k+r)}$, it follows that
$k + 1 + (r+1)(r+4)/2 = k + 1 +r(r+3)/2 + r +2 < \beta ^{2(k+r)} +r+2$.
By i) we have $\beta ^{2(k+r)} +r+2 < \beta ^{2(k+r)} + \beta ^{k+r+1}$, so that it remains to show that $\beta ^{2(k+r)} + \beta ^{k+r+1} < \beta ^{2(k+r+1)}$, i. e. that
$\beta ^{k+r} + \beta  < \beta ^{k+r+2} = \beta ^{k+r+1} + \beta ^{r+1} =  \beta ^{k+r} + \beta ^{r} + \beta ^{r+1}$,
which follows from $\beta < 1 + \beta$ if $r=0$ and $1 < \beta ^{r-1} + \beta ^{r}$ if $r \ge 1$.

\end{proof}

\begin{theorem} \label{1/2theorem}
If $f$ is a function from $\N$ to $\R^{+}$ such that $f(n) \ge \max \{n+1, \exp (E_0(f) n)  \}$ for any $ n \in \N$, then $E_W(f)> \frac 12 E_0(f)$.
\end{theorem}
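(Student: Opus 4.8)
Let $E_0 = E_0(f)$. If $E_0 = 0$ there is nothing to prove, so assume $E_0 > 0$. The idea is to produce, for a suitable integer $k$, a subshift sitting inside $W(f)$ whose topological entropy exceeds $\frac12 E_0$; since $E_W(f) = h_{top}(W(f),T)$ this gives the conclusion. The natural choice of $k$ is the smallest integer for which the root $\beta > 1$ of $\beta^{k+1} = \beta^k + 1$ satisfies $\exp(\frac12 E_0) < \beta$ — equivalently, $k$ large enough that the "Fibonacci-like" growth rate $\log\beta$ of the recursion exceeds $\frac12 E_0$. (Note $\beta \to 1$ as $k \to \infty$, and for each fixed $k$, $\beta$ is the exponential growth rate of the numbers $\beta^{k+r}$, which by Lemma \ref{lem_beta}(i) dominate $r+1$; the quadratic estimate (ii) is the key extra room.)

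\textbf{Construction of the subshift.} Fix two words $A$, $B$ of equal length $m$, chosen so large that $\exp(E_0 m) < f(m)$ is comfortably true and, more importantly, so that $k + 1 + r(r+3)/2 < \beta^{2(k+r)}$ from Lemma \ref{lem_beta}(ii) translates into a usable bound on short factors. I would consider the set $Y$ of infinite words over the two-letter alphabet $\{A,B\}$ in which no two consecutive occurrences of the same letter are allowed within the first $k$ "super-letters" — more precisely, impose a sofic constraint forbidding $AA$ and $BB$ as factors near the start, in the spirit of Lemma \ref{lem_normalizable2}, but allowed freely afterwards. The point is to bound $p_w(n)$ for words $w$ obtained by concatenating blocks from $\{A,B\}$: for $n \le km$ roughly, the constraint keeps the factor count linear (like a sturmian-type bound, $\approx n+1$ up to controlled additive error coming from the quadratic term in Lemma \ref{lem_beta}(ii)), while for $n$ in the range $km \le n$ the complexity is allowed to grow at rate $\log\beta$ per symbol, i.e. $\frac{1}{m}\log\beta$ per letter of $A^\N$, which we arranged to exceed $\frac12 E_0$. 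The verification that such $w$ lie in $W(f)$ splits into two regimes: for small $n$ use $f(n) \ge n+1$ together with the linear factor bound; for large $n$ use $f(n) \ge \exp(E_0 n)$ against $\exp(\frac{n}{m}\log\beta) = \beta^{n/m}$, and here the choice $\beta^2 < \exp(E_0 m)$, i.e. $2\log\beta < E_0 m$, is exactly what makes $\beta^{n/m} \le \exp(\frac12 E_0 n) \le f(n)$ hold — wait, that only gives $\frac12 E_0$, which is the boundary; the strict inequality in Theorem \ref{1/2theorem} forces us to take $k$ slightly larger so that $\log\beta$ sits strictly between $\frac12 E_0 m$ and $E_0 m$, giving both $h_{top}(Y) > \frac12 E_0$ and $Y \subset W(f)$.

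\textbf{Conclusion.} Once $Y \subset W(f)$ is established, monotonicity of topological entropy gives $E_W(f) = h_{top}(W(f),T) \ge h_{top}(Y,T)$, and a direct count (or Lemma \ref{lemFekete} applied to $\log$ of the transfer-matrix-type growth of admissible $\{A,B\}$-blocks) shows $h_{top}(Y,T) = \frac{1}{m}\log\beta > \frac12 E_0$, completing the proof.

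\textbf{Main obstacle.} The delicate point is the factor-counting for words in $Y$ across the "transition" length $n \approx km$: I must show that imposing the sturmian-like no-$AA$/no-$BB$ constraint on the first $k$ super-letters really does keep $p_w(n) \le f(n)$ for all $n \le km$, not just asymptotically. This is where Lemma \ref{lem_beta}(ii) — the bound $k+1+r(r+3)/2 < \beta^{2(k+r)}$ — must be used: the quadratic term $r(r+3)/2$ should account for the number of distinct factors of a given length that can straddle block boundaries in a near-sturmian word, and matching it against $\beta^{2(k+r)} \le f(\text{something})$ is the crux. Getting the block length $m$ and the index $k$ to cooperate simultaneously with both the $n+1$ bound and the exponential bound, with strict inequality, is the part that will require care.
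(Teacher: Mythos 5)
Your overall strategy (exhibit a low-complexity, positive-entropy subshift inside $W(f)$ and use Lemma~\ref{lem_beta}(ii) as the quantitative engine) is the right one, and you correctly identify the crux as controlling $p_w(n)$ at intermediate lengths. But the proposal has a genuine gap at its centre: the subshift $Y$ is never actually defined in a usable way. Forbidding $AA$ and $BB$ ``near the start but allowed freely afterwards'' is not a shift-invariant condition, and forbidding them everywhere leaves only $(AB)^\infty$ and $(BA)^\infty$, i.e.\ zero entropy; either way you do not get a subshift whose complexity is provably $\le n+1$ for small $n$ and of growth rate $\log\beta$ for large $n$. You have also imported Lemma~\ref{lem_normalizable2} from the wrong half of the paper: the renormalization machinery is used to prove optimality (Theorem~\ref{1/2theoremoptimal}), not the lower bound. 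The object the paper actually uses is $\Sigma_k=\{0,10^k\}^\N$ (two $1$'s separated by at least $k$ zeros), whose complexity $q_k(n)$ satisfies the exact recurrence $q_k(n+k+1)=q_k(n+k)+q_k(n)$ with $q_k(n)=n+1$ for $n\le k+1$ and $q_k(k+r+1)=k+2+r(r+3)/2$. The whole point is then that $\gamma_k:=\max_{n\ge k+1}\frac{\log q_k(n)}{n}$ is attained already for $n\le 2(k+1)$ (by submultiplicativity of $q_k$), so those explicit quadratic values control \emph{all} lengths; Lemma~\ref{lem_beta}(ii) gives $\gamma_{k-1}<2\log\beta_k$, and choosing $k_0$ with $\gamma_{k_0}\le E_0(f)<\gamma_{k_0-1}$ yields both $w^{(k_0)}\in W(f)$ and $E_W(f)\ge\log\beta_{k_0}>\frac12\gamma_{k_0-1}>\frac12E_0(f)$. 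Your sketch stalls exactly where this bookkeeping is needed: you observe that matching $\log\beta$ against $E_0 m$ only reaches the boundary $\frac12E_0$ and propose to ``take $k$ slightly larger,'' but $k$ varies discretely, changing $k$ changes $\beta$ and the membership condition $p_w\le f$ simultaneously, and without the monotone decreasing sequence $\gamma_k\to0$ and the inequality $\gamma_{k-1}<2\log\beta_k$ you cannot land strictly above $\frac12E_0$ while staying inside $W(f)$.

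A second, independent gap is the missing case analysis for large entropy. A subshift built from two blocks of length $m$ has entropy at most $\frac{\log 2}{m}\le\log 2$, so once $E_0(f)>2\log 2$ (possible for $q\ge 8$) no choice of $m$ can beat $\frac12E_0(f)$. The paper therefore treats $E_0(f)\ge\log 2$ separately with a normal sequence on $\lfloor e^{E_0(f)}\rfloor$ letters (using $\log m>\frac12\log(m+1)$) and $\frac12\log 3\le E_0(f)<\log 2$ with the golden-mean shift, reserving the $\Sigma_k$ construction for $E_0(f)<\frac12\log 3$, where $k\ge 2$ is available. Your proof as written would fail outright in the high-entropy regime.
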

\begin{remark}
In particular, it follows from Remark \ref {rmkC^*} that Theorem \ref {1/2theorem} applies when $f$ 
satisfies the conditions $(\mathcal C^*)$.
\end{remark}

\begin{proof}
We split the proof in three parts depending on the value of $E_0(f)$.

If $E_0(f) \ge \log 2$ and $m = \lfloor \exp(E_0(f)) \rfloor \ge 2$, it follows from our hypothesis that $f(n) \ge m^n$ for any $n \in \N$.
If $w$ is a normal sequence on the alphabet $\{1,2,\dots,m \}$, we have $p_w(n)=m^n$ for any $n \in \N$, so that $w \in W(f)$ and thus
$$E_W(f) \ge E(w) =  \lim_{n\to+\infty}\frac 1n\log p_w(n)=\log m > \frac{\log(m+1)}2 >  \frac{E_0(f)}2.$$

If $\frac 12 \log 3 \le E_0(f) <\log 2$, it follows from  our hypothesis that $f(n) \ge \max \{n+1, 3^{n/2}\} \ge F_{n+2}$ for any $n \in \N$,
where $(F_n)_{n\in\N}$ is the Fibonacci sequence, defined in Section \ref {5.3}.
For almost any infinite words $w$ on the alphabet $A=\{0,1\}$ without the factor $11$ we have
$p_w(n) = F_{n+2}$ for any $n \in \N$, so that we have
$$E_W(f) \ge E(w) = \lim_{n\to+\infty}\frac 1n\log p_w(n)=\log (\frac{1+\sqrt 5}{2}) >  \frac 12 \log 2 > \frac{E_0(f)}2.$$  

If $E_0(f) < \frac 12 \log 3 $, let us consider,
for any positive integer $k$,
the set $\Sigma_k = \{0,10^k\}^\N$
(i. e. the set of infinite words over the alphabet $\{0,1\}$ such that two occurrencies of the letter $1$ are always separated by at least $k$ occurrencies
of the letter $0$).

For any $n \in \N$, let $L_k(n)$ be the set of the words of length $n$ over the alphabet $\{0,1\}$ such that two occurrencies of the letter $1$
are always separated by at least $k$ occurrencies
of the letter $0$, and let $q_k(n)=|L_k(n)|$.
For almost any $w \in \Sigma_k$ we have $p_w(n)=q_k(n)$ for any $n \in \N$
and it is not difficult to construct an infinite word $w=w^{(k)} \in \Sigma_k$ which satisfy these equalities: if we enumerate, for each
$n \ge 1$, $L_k(n)=\{\al_1^{(n)},\al_2^{(n)},\dots,\al_{q_k(n)}^{(n)}\}$, and take  
$$w^{(k)}=\al_1^{(1)}0^k\al_2^{(1)}0^k\dots\al_{q_k(1)}^{(1)}0^k\al_1^{(2)}0^k\al_2^{(2)}0^k\dots0^k\al_{q_k(2)}^{(2)}0^k\dots\al_1^{(n)}0^k\al_2^{(n)}0^k\dots0^k\al_{q_k(n)}^{(n)}0^k\dots.$$
then we have $p_w^{(k)}(n)=q_k(n)$ for any $n \in \N$.

For any $n \in \N$  we have
$$L_{k}(n+k+1) = L_{k}(n+k) 0 \cup L_{k}(n+k) 1 = L_{k}(n+k) 0 \cup L_{k}(n) 0^k 1,$$
which implies that, for any fixed positive integer $k$, the sequence $(q_k(n))_{n \in \N}$ satisfies the 
following recurrence, valid for any $n \in \N$: 
$$q_k(n+k+1)=q_k(n+k)+q_k(n).$$ 
Moreover the sequence $(q_k(n))_{n \in \N}$ satisfies $q_k(n)=n+1$ for $0 \le n \le k+1$
(a word in $L_k(n)$ for $0 \le n \le k+1$ has at most one letter equal to $1$).
This implies in particular that, for $0 \le r \le k+1$, we have
$$q_k(k+r+1)=k+2+r(r+3)/2.$$
This also implies that there are two positive constants $c_k$ and $d_k$ such that for any $n \in \N$ we have $c_k.\be_k^n<q_k(n)=p_{w^{(k)}}(n)<d_k.\be_k^n$,
where $\be_k$ is the largest real root of the polynomial $x^{k+1}-x^k-1$. 



Let us denote $$\gamma_k=\max_{\substack{k+1 \le n \le 2(k+1)}} {\frac{\log q_k(n)}{n}} = \max_{\substack{0 \le r \le k+1}} {\frac{\log (k+2+r(r+3)/2)}{k+r+1}}$$
and remark that $\gamma_k=\max_{\substack{ n \ge k+1}} {\frac{\log q_k(n)}{n}}$.
Indeed, if we write any integer $n \ge k+1$ as $n = b(k+1) + k+ r+1$, with $b \ge 0$ and $0 \le r \le k+1$, we get
$q_k(n) = q_k(b(k+1) + k+r+1) \le (q_k(k+1))^b q_k(k+r+1) \le \exp(b (k+1) \gamma_k ) \exp((k+r+1) \gamma_k ) =  \exp(\gamma_k n).$

It follows that for any integer $k \ge 1$, we have $ \log \be_k \le \gamma_{k}$ and
it follows from Lemma \ref {lem_beta} ii) applied to $\beta = \beta_k$ that, for any integer $k \ge 2$, we have
$$\gamma_{k-1} = \max_{\substack{0 \le r \le k}} {\frac{\log (k+1+r(r+3)/2)}{k+r}} < 2 \log \be_k.$$

 This is enough to finish the proof. Indeed, for any integer $n \ge k+1$, we have $L_k(n) \subset L_{k-1} (n)$, so that $q_k(n) < q_{k-1} (n)$ and
$$\gamma_k=\max_{\substack{ n \ge k+1}} {\frac{\log q_k(n)}{n}} < \max_{\substack{ n \ge k}} {\frac{\log q_{k-1}(n)}{n}} = \gamma_{k-1}.$$
Since  $$\gamma_k= \max_{\substack{0 \le r \le k+1}} {\frac{\log (k+2+r(r+3)/2)}{k+r+1}}<\frac {\log (k+2+(k+1)(k+4)/2)}{k+1},$$
we have $\lim_{n\to+\infty}\gamma_k=0$.
Now, as $E_0(f) < \frac 12 \log 3$ and $\gamma_1 =  \frac 12 \log 3$, let $k_0 \ge 2$ be such that $\gamma_{k_0} \le E_0(f) < \gamma_{k_0-1}$.
For any $n \in \N$, we have $$f(n) \ge \max \{n+1, \exp (E_0(f) n)  \} \ge \max \{n+1, \exp (\gamma_{k_0}  n)  \} \ge q_{k_0}(n),$$ so that
$w^{(k_0)} \in W(f)$, which implies $$E_W(f) \ge \lim_{n\to+\infty}\frac{\log q_{k_0}(n)}{n}=\log \be_{k _0}> \frac 12 \gamma_{k_0-1} > E_0(f)/2.$$

\end{proof}

\section{The constant $\frac 12$ is optimal} \label {=1/2}

The constant $\frac 12$ in Theorem \ref {1/2theorem} cannot be improved, as shows the following theorem:

\begin{theorem} \label{1/2theoremoptimal}
For any $c> \frac 12$, there is a function $f$ from $\N$ to $\R^{+}$ satisfying the conditions $(\mathcal C^*)$ such that $\rho(f)<c$.
\end{theorem}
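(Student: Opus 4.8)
The plan is to build, for each $c>\tfrac12$, an explicit function $f$ satisfying $(\mathcal C^*)$ whose word entropy is essentially half its exponential growth rate. The natural candidates are precisely the functions $q_k$ appearing in the proof of Theorem \ref{1/2theorem}, suitably rescaled: recall $q_k(n)=n+1$ for $0\le n\le k+1$ and $q_k(n+k+1)=q_k(n+k)+q_k(n)$, so that $q_k$ grows like $\be_k^n$ with $\be_k$ the largest root of $x^{k+1}-x^k-1$, and $\log\be_k\to 0$ as $k\to\infty$. For a parameter $t>1$ to be chosen, I would set $f(n)=q_k(n)\cdot t^n$ (or rather a function with that growth rate, adjusted at small $n$ so that $f(n)\ge n+1$ and condition $(\mathcal C^*)(i)$ holds; one may also need to replace $f$ by the construction of Remark \ref{rmk5.4} / Remark 7.3 of [MM17] to get genuine monotonicity and submultiplicativity while preserving $W(f)$ and $E_0(f)$). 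Then $E_0(f)=\log\be_k+\log t$. The point is to estimate $E_W(f)$ from above.

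The key step — and the main obstacle — is the upper bound on $E_W(f)$. Here is where the renormalization theorem (Theorem \ref{lem_normalizable1}) enters. Any $w\in W(f)$ with $\log t$ small has, for $n$ up to roughly $k$, complexity $p_w(n)\le f(n)\approx (n+1)t^n$, which is not quite $n+1$; so $w$ need not be literally pre-sturmian. I would instead argue that $w$ decomposes: a bounded-complexity ``pre-sturmian skeleton'' over two blocks $a$ and $ba^s$ with $(s+1)|a|+|b|>k$ (after applying Theorem \ref{lem_normalizable1} to an appropriate sub-object, or a quantitative variant of it allowing a small multiplicative slack $t^n$), together with at most a few free symbols per block. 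Each block has length $\gtrsim k$, and over each window of length $n$ there are $\sim n/k$ blocks, each contributing a bounded amount of entropy; combined with the pre-sturmian (linear complexity) structure of the block sequence, this forces $p_w(n)\le C(t)^{n/k}\cdot q_k(n)^{O(1)}$ type bounds, giving $E(w)\le \log\be_k + O(\log t /k)$ roughly — in any case $E_W(f)$ bounded away from $E_0(f)$. Then
$$
\rho(f)=\frac{E_W(f)}{E_0(f)}\le \frac{\log\be_k+o(1)}{\log\be_k+\log t},
$$
and since by Lemma \ref{lem_beta}(ii) we have (as in the proof of Theorem \ref{1/2theorem}) the comparison $\gamma_{k-1}<2\log\be_k$, i.e. the relevant entropies at scale $k-1$ and $k$ differ by essentially a factor $2$, choosing $k$ large and $t$ appropriately (so that $\log\be_k+\log t$ matches some $\gamma_{k_0}$-type threshold) pins $\rho(f)$ down to just above $\tfrac12$; more precisely one wants $E_0(f)$ to sit just below $\gamma_{k-1}\approx 2\log\be_k$ while $E_W(f)\approx\log\be_k$, yielding $\rho(f)\to\tfrac12^+$.

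Concretely I would: (1) fix $k$ large; (2) choose $t>1$ so that $\log\be_k+\log t = \gamma_{k-1}-\eta$ for small $\eta>0$, so $E_0(f)$ is just under $\gamma_{k-1}$ and $w^{(k)}$ (not $w^{(k-1)}$) is the richest word of type $\Sigma_j$ lying in $W(f)$; (3) prove $E_W(f)=\log\be_k$ exactly, using the renormalization theorem to show no $w\in W(f)$ can beat the $\Sigma_k$-words — this is the step requiring real work, handling the slack $t^n$ and the non-pre-sturmian words via a block/induction argument; (4) compute $\rho(f)=\log\be_k/(\gamma_{k-1}-\eta)$ and observe, via $\gamma_{k-1}<2\log\be_k$ together with $\gamma_{k-1}\to 2\log\be_k$ in the appropriate sense (or by letting $k\to\infty$, since $\gamma_{k-1}/\log\be_k\to 2$), that this can be made less than any prescribed $c>\tfrac12$. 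The subtle point throughout is verifying $(\mathcal C^*)$ for the constructed $f$ and controlling the gap between ``words of type $\Sigma_k$'' and ``arbitrary words in $W(f)$'', which is exactly what Theorem \ref{lem_normalizable1} is designed to supply.
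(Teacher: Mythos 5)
Your overall strategy --- choose $f$ so that its low-order values force a $\Sigma_k$-like structure, invoke the renormalization theorem, and aim for $E_W(f)\approx\log\be_k$ while $E_0(f)\approx 2\log\be_k$ --- is the right one, but there is a genuine gap at exactly the point you yourself flag as ``the step requiring real work,'' and your choice of $f$ makes that step unworkable rather than merely technical. The problem is the slack $t^n$. For $\rho(f)$ to approach $\tfrac12$ you need $\log t\ge\log\be_k$ (since $w^{(k)}\in W(f)$ forces $E_W(f)\ge\log\be_k$), and then $t^{k}\approx\be_k^{k}=1/(\be_k-1)\approx k/\log k$, so $f(k)=q_k(k)t^k\approx k^2/\log k\gg k+1$. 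Words in $W(f)$ are therefore nowhere near pre-sturmian of order $k$, Theorem \ref{lem_normalizable1} does not apply to them, and no ``quantitative variant allowing a multiplicative slack $t^n$'' exists in the paper or in your sketch; producing one is essentially the whole difficulty of the theorem. The paper's construction avoids this entirely: it takes $f(n)=\max\{n+1,\theta^n\}$ with $\theta=\exp(\log k/k)$, and since $\theta^n=k^{n/k}\le n+1$ for all $n\le k$ (convexity below the chord), every non-ultimately-periodic $w\in W(f)$ satisfies $p_w(k)\le k+1$ and is \emph{literally} pre-sturmian of order $k$, so Theorem \ref{lem_normalizable1} applies verbatim; the exponential growth $E_0(f)=\log k/k$ lives only at scales $n>k$.

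Second, even granting a renormalization $w\in\{a,ba^s\}^\N$, your proposed entropy bound (``each block contributes a bounded amount of entropy,'' $p_w(n)\le C(t)^{n/k}q_k(n)^{O(1)}$) does not produce the constant $\tfrac12$; a crude per-block count only gives $E(w)=O(1/|a|)$, which says nothing when $|a|$ is small. The paper's argument is considerably more delicate: it first reduces to small $|a|$ (hence large $s$), then bounds, for each window $[(1+(r-1)\ve)k,(1+r\ve)k)$, the number $|X(r)|$ of distinct short gaps between consecutive occurrences of $b$ by $\ve^{-1}k^{(r+2)\ve}$ using the complexity hypothesis at scale $t_r$, and feeds these counts into a renewal inequality $y_n\le\sum_j y_{n-r_j}+\sum_{j\ge 2k}y_{n-j}$ whose characteristic root $\hat\lambda=e^{\sigma\log k/k}$ is shown to satisfy $\sigma<\tfrac12+2\ve$. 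The constant $\tfrac12$ emerges from the balance $k^{(r+2)\ve}\cdot k^{-(1+(r-1)\ve)\sigma}\lesssim 1$ near $r\approx 1/\ve$ (roughly: at most $\approx k$ admissible gap lengths near $2k$, each weighted $k^{-2\sigma}$), not from the comparison $\gamma_{k-1}<2\log\be_k$ of Lemma \ref{lem_beta}, which is used only in the lower bound of Theorem \ref{1/2theorem}. So your proposal correctly identifies the ingredients (pre-sturmian structure, renormalization, $\Sigma_k$-words as extremizers) but leaves unproved both the applicability of the renormalization theorem to your $f$ and the quantitative entropy upper bound that actually yields $\tfrac12$.
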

\begin{remark}
By the proof of Theorem \ref {1/2theorem} it follows that, if $c$ is very close to $\frac 12$ in Theorem \ref {1/2theoremoptimal}, then $E_0(f)$ is necessarily very close to $0$.
\end{remark}
\begin{proof}
Let $k$ be a large positive integer, $\theta = \exp (\frac {\log k} {k})$ and define the function $f$ by $$f(n)=\max\{n+1,\theta^{n}\}.$$
We have $E_0(f)= \log \theta = \frac {\log k} {k}$ and our goal is to give, for any $w \in W(f)$, an upper bound for $E(w)=\lim\limits_{n\to\infty}\frac 1n\log p_w(n)$.
As ultimately periodic infinite words have zero entropy, we can restrict ourselves to non ultimately periodic infinite word $w \in W(f)$.
Such infinite words are pre-sturmian of order $k$ (we have $p_w (k) \le f(k) = k+1$), so that if follows from Theorem \ref {lem_normalizable1}
that $w$ is renormalizable with words $A = a$ and $B = ba^s$ with different first letters, $|a| \ge |b|$ and $(s+1)|a|+|b|>k$.
Any factor of length $n$ of $w \in \{0,1\}^\N$ can be covered by a factor of length at most  $\lceil \frac n {|a|} \rceil + 1$ in the representation of $w$ as an infinite word on the
alphabet $\{A, B\}$.
As there are at most $2^{|a|+|b|-1}$ possible choices for the words $A$ and $B$, it follows that $p_w (n) \le 2^{|a|+|b| + \lceil \frac n {|a|} \rceil}$ and
$$E (w) = \lim_{n\to+\infty}\frac{\log p_w(n)}n \le \frac1 {|a|} \log 2.$$
It follows in particular that, if $E(w) \ge \frac 1 2 \log \theta = \frac{\log k}{2k}$, then $|a| <\frac{2k \log 2}{\log k}$ and, since $(s+2)|a| \ge (s+1)|a|+|b| > k$,
$s$ should be large for $k$ large.

The infinite word $w$ can be written as
\begin{equation} \label {w}
w= a^{s_0} b a^{s_1} \dots b a^{s_j} \dots,
\end{equation}
with $s_0 \ge 0$ and $s_j  \ge s$ for any integer $j \ge 1$.
We will concentrate on the gaps of size less than $2k$ between two consecutive occurrencies of the first letter of the word $b$ in the representation (\ref {w}).
Let $\ve$ be a small positive constant such that 
\begin{equation} \label {epsilon}
\frac{2 \log \log k}{\log k} \le \ve  \le \frac14
\end{equation}
to be chosen later and denote
$$\{r_1,r_2,\dots, r_p\} = \{s_j |a|+|b|, j \in \N\} \cap \{(1 - \ve)k, \dots, 2k-1\}.$$
For each $0 \le r \le \lceil 1/\ve \rceil$,
let $X (r)$ be the set of the indices $j$ with $$(1+(r-1) \ve)k\le r_j < (1+r \ve)k.$$
If we consider $h=\lfloor  \frac {\ve k} {|a|} \rfloor \le  \lfloor\ve (s+2) \rfloor$,
it follows from (\ref {epsilon}) that $h \le \lfloor (s+2)/4 \rfloor \le s/2$ so that, for any $j \in X(r)$, the word
$$e_1^{(j)}e_2^{(j)}\dots e_q^{(j)}=a^{2h}ba^{(r_j-|b|)/|a|}ba^{2h}$$ is a factor of $w$.
If $t_r=\lfloor (1+(r+2) \ve)k \rfloor$,
we claim that the $\lceil \ve k \rceil |X(r)| \le |a|(h+1) |X(r)|$ factors of size $t_r$ of $w$
$$e_i^{(j)}e_{i+1}^{(j)}\dots e_{t_r+i-1}^{(j)}, 1 \le i \le \lceil \ve k \rceil, j \in X(r)$$
are distinct. Indeed

- if $j_1<j_2$ and $e_i^{(j_1)}e_{i+1}^{(j_1)}\dots e_{t_r+i-1}^{(j_1)} = e_{i}^{(j_2)}e_{i+1}^{(j_2)}\dots e_{t_r+i-1}^{(j_2)}$,
we would have $e_{(2h-1)|a|+r_{j_1}+1}^{(j_1)}$ equal to the first letter of $b$, while $e_{(2h-1)|a|+r_{j_1}+1}^{(j_2)}$ is equal to the first letter of $a$, which would be a contradiction.

- if $i<\ell$ and $e_i^{(j_1)}e_{i+1}^{(j_1)}\dots e_{t_r+i-1}^{(j_1)} = e_{\ell}^{(j_2)}e_{\ell+1}^{(j_2)}\dots e_{t_r+\ell-1}^{(j_2)}$,
we would have $e_{n-\ell+i}^{(j_1)}=e_n^{(j_2)}$ for $\ell \le n \le t_r+\ell-1$ and, since
$e_m^{(j_l)}=e_{m+|a|}^{(j_l)}, l=1,2$ while $m+|a|<2|a|h$, we would have $e_{2|a|h+1}^{(j_2)}=e_{2|a|h+1-\ell+i}^{(j_1)}=e_{2|a|h+1-\ell+i-|a|}^{(j_1)}=e_{2|a|h+1-|a|}^{(j_2)}$,
which would lead to a contradiction, since $e_{2|a|h+1}^{(j_2)}$ is the first letter of $b$, and $e_{2|a|h+1-u}^{(j_2)}$ is the first letter of $a$, and they are distinct.\\
It follows that $$\ve k |X(r)| \le \lceil \ve k \rceil |X(r)| \le p_w(t_r) \le \theta^{t_r} \le k^{1+(r+2)\ve},$$ so that
$|X(r)| \le \frac1{\ve} k^{(r+2)\ve}$.

For $a$ and $b$ fixed, let us consider now, for any $n \in \N$, the number $y_n$ of factors of length $n$ of $w$ of the form
$v_1v_2\dots v_m$ with, for any  $ j \in \{1, \dots , m\}$, $v_j \in \{a,b\}$ and such that $v_m=b$.
For $k$ large enough, it follows from (\ref{epsilon}) that $\ve  \ge \frac{2 \log \log k}{\log k} \ge \frac{2 \log 2}{\log k}$, so that
$$(1-\ve)k  \le (1 - \frac{2 \log 2}{\log k})k \le k - |a| <  s |a|+|b|$$
and $$\inf_{j\ge 1} |ba^{s_j}| \ge (1-\ve)k.$$
It follows that for any integer $n \ge 2k$, we have 
$$y_n\le \sum_{j=1}^p y_{n-r_j}+\sum_{j=2k}^n y_{n-j},$$
so that $$\limsup_{n\to+\infty}\frac 1n\log y_n \le \log \hat \lambda,$$
where $\hat \lambda>1$ satisfies the equation
$$1=\sum_{j=1}^p {\hat \lambda}^{-r_j}+\sum_{j=2k}^\infty {\hat \lambda}^{-j}=\sum_{j=1}^p {\hat \lambda}^{-r_j}+\frac{{\hat \lambda}^{-2k}}{1-{\hat \lambda}^{-1}},$$
which implies
$$1 \le \sum_{r=1}^{\lceil 1/\ve \rceil} |X(r)| {\hat \lambda}^{-(1+(r-1) \ve)k}+\frac{{\hat \lambda}^{-2k}}{1-{\hat \lambda}^{-1}} \le \sum_{r=1}^{\lceil 1/\ve \rceil} \frac1{\ve} k^{(r+2)\ve}{\hat \lambda}^{-(1+(r-1) \ve)k}+ \frac{{\hat \lambda}^{-2k}}{1-{\hat \lambda}^{-1}}.$$
It follows from  (\ref{epsilon}) that
$$\frac 1{\ve}  \le \frac{\log k}{2 \log \log k} = e^{(\log \log k - \log \log \log k - \log 2)} \le e^{(\log \log k)} = k^{\frac{\log \log k}{\log k}} \le k^{\ve /2}$$
so that, writing $\hat \lambda = e^{\sigma \log(k)/k}$ with $\sigma > 0$, we have
$$1 \le \sum_{r=1}^{\lceil 1/\ve \rceil} \frac1{\ve} k^{(r+2)\ve}k^{-(1+(r-1) \ve)\sigma}+ \frac{k^{-2\sigma}}{1-e^{-\sigma \log(k)/k}}=O(\frac 1{\ve}k^{-\sigma+\ve(2+\sigma)}k^{\ve\lceil 1/\ve \rceil(1-\sigma)}+ \frac {k^{1-2\sigma}} {\log(k)})$$
$=O(\frac1{\ve}k^{1-2\sigma+3\ve}) =O(k^{1-2\sigma+\frac 52\ve}),$
which implies that $\sigma < 1/2+2 \ve$, for $k$ large.

If a factor of length $n$ of $w$ contains at least one occurrence of the word $b$, it is of the form
$\beta_1 v_1v_2\dots v_m \beta_2$ where $\beta_1$ is a strict suffix of $a$ or $b$, $v_j \in \{a,b\}$ for any  $ j \in \{1, \dots , m\}$, $v_m=b$ and $\beta_2$ a prefix of $a^i$ for some $i \ge 0$.
As there are $n - |b| + 1$ possible choices for the emplacement of the first letter of the word $v_m$, there are at most
$$2(n - |b| + 1) \sum_{n' \le n} y_{n'}$$ such factors of length $n$.
On the other hand, there are at most $2|a|^2$ factors of length $n$ of $w$ without any occurrence of the word $b$ (there are the words of the the kind
$\beta_1 a^m \beta_2$, where $\beta_1$ is a strict suffix of $a$ or $b$ and $\beta_2$ a strict prefix of $a$ or $b$).
Finally, as there are at most $2^{|a| + |b| -1}$ possible words $a$ and $b$, we have
$$p_w(n) \le 2^{|a| + |b|} ((n - |b| + 1) \sum_{n' \le n} y_{n'} + |a|^2)$$
and it follows that
$$E(w) = \lim_{n\to+\infty}\frac 1n\log p_n(w) = \limsup_{n\to+\infty}\frac 1n\log y_n \le \log \hat \lambda = \sigma \frac {\log k}k < (1/2+2 \ve) \frac {\log k}k .$$
If we choose now $\ve=\frac12(c-1/2)$, where $c>1/2$ is given in the statement, we have
$$E_W (f) =\sup_{\substack{w \in W(f)}} E(w)  < c \frac {\log k}k ,$$
which concludes the proof.
\end{proof}





\section*{Acknowledgement}
We thank the referee for his comments and suggestions.


\begin{thebibliography}{u{standard}}







\bibitem[CH73]{C-H} E.M. Coven and G.A. Hedlund, Sequences with minimal
  block growth, \textit{Math. Systems Theory} 7 (1973), 138-153.
  




\bibitem[Fek23]{Fek} M. Fekete, Uber der Verteilung der Wurzeln bei gewissen algebraischen Gleichungen mit ganzzahligen Koeffizienten, \textit{Mathematische Zeitschrift} 17 (1923), 228-249.

\bibitem[Fer99]{Fer99} S. Ferenczi, Complexity of sequences and
 dynamical systems, \textit{Discrete Math.}, 206(1-3):145-154, 1999.
  

\bibitem[Gri73]{Gri} C. Grillenberger, Construction of strictly ergodic
 systems I. Given entropy, \textit{Z. Wahrscheinlichkeitstheorie
  verw. Geb.} 25 (1973), 323-334.

\bibitem[HM38]{HM38} G. A. Hedlund and M. Morse, Symbolics dynamics,
  \textit{Amer. J. Math.} 60 (1938), 815-866.
  
\bibitem[HM40]{HM40} G. A. Hedlund and M. Morse, Symbolics dynamics
  II. Sturmian trajectories, \textit{Amer. J. Math.} 62 (1940), 1-42.
  



\bibitem[Lot02]{Lot02} M. Lothaire, Algebraic combinatorics on words,
\textit{Encyclopedia of Mathematics and Its
 applications} 90, Cambridge University Press, 2002.

  
\bibitem[MM10]{MM10} C. Mauduit and C. G. Moreira, Complexity of infinite sequences with zero entropy,
\textit{Acta Arithmetica} 142 (2010), 331-346.

\bibitem[MM12]{MM12} C. Mauduit and C. G. Moreira, Generalized Hausdorff dimensions of sets of real numbers with zero entropy 
	expansion,
\textit{Ergodic Theory and Dynamical Systems} 32 (2012), 1073-1089.

\bibitem[MM17]{MM17} C. Mauduit and C. G. Moreira, Complexity and fractal dimensions for infinite sequences with positive entropy,
\textit{preprint}, https://arxiv.org/abs/1702.07698.
    


\bibitem[PF02]{PF02} N. Pytheas Fogg. Substitutions in dynamics,
  arithmetics and combinatorics, \textit{Lecture Notes in Mathematics
    1794}, Springer, 2002. Edited by V. Berth\'e, S. Ferenczi, C.
  Mauduit and A. Siegel.





\end{thebibliography}
\end{document}